\documentclass[12pt,reqno]{amsart}
\usepackage{amsmath,amsthm,amsfonts,amssymb}
\usepackage[french,english]{babel}
\usepackage{tikz}
\usepackage{psfrag,pslatex}
\usepackage{graphicx}
\usepackage{epsfig}

 \headsep=1.5cm
\topmargin=.5cm

\numberwithin{equation}{section}

\newcommand{\dist}{\operatorname{dist}}

\newcommand{\e} {\varepsilon}

\newcommand{\la} {\lambda}      

\newcommand{\vfi}{\varphi}

\newcommand{\s} {\sigma}


\def \NN {{\mathbb N}}

\def \Z {{\mathbb Z}}
\def \T {{\mathbb T}}

\def \cK {{\mathcal K}}

\def \cU {{\mathcal U}}

\def \cW {{\mathcal W}}

\newtheorem{theorem}{Theorem}[section]
\newtheorem{corollary}[theorem]{Corollary}
\newtheorem{proposition}[theorem]{Proposition}

\newtheorem{conjecture}[theorem]{Conjecture}

\newtheorem{definition}[theorem]{Definition}

\theoremstyle{remark}
\newtheorem{remark}[theorem]{Remark}


\begin{document}

\renewcommand{\subjclassname}{\textup{2000} Mathematics Subject Classification}


\setcounter{tocdepth}{2}

\keywords{}

\subjclass{}

\renewcommand{\subjclassname}{\textup{2000} Mathematics Subject Classification}


\title{On the three-legged accessibility property}

\begin{abstract} We show that certain types of the three-legged accessibility property of a partially hyperbolic diffeomorphism imply the existence of a unique minimal set for one strong foliation and the transitivity of the other one.
In case the center dimension is one, we also give a criteria to obtain three-legged accessibility in a robust way. We show some applications of our results to the time-one map of Anosov flows, skew products and certain Anosov diffeomorphisms with partially hyperbolic splitting.

\end{abstract}

\thanks{}

\author{Jana Rodriguez Hertz}
\address{Department of Mathematics, Southern University of Science and Technology of China, 1088 Xueyuan Rd., Xili, Nanshan District, Shenzhen, Guangdong, China 518055}
\email{rhertz@sustc.edu.cn}

\author{Ra\'ul Ures}
\address{Department of Mathematics, Southern University of Science and Technology of China, 1088 Xueyuan Rd., Xili, Nanshan District, Shenzhen, Guangdong, China 518055} \email{ures@sustc.edu.cn}

\maketitle
\section{Introduction}

A diffeomorphism $f$ of a closed manifold $M$ is {\em partially
hyperbolic} if the tangent bundle $TM$ of $M$, splits into three invariant sub-bundles: 
$TM=E^{s}\oplus E^{c}\oplus E^{u}$
such that  all unit vectors
$v^\s\in E^\s_x$ ($\s= s, c, u$) with $x\in M$ satisfy  :
\begin{equation}\label{pointwise.ph}
 \|T_xfv^s\| < \|T_xfv^c\| < \|T_xfv^u\| 
\end{equation}
for some suitable Riemannian metric. The {\em stable bundle} $E^{s}$ must also satisfy
$\|Tf|_{E^s}\| < 1$ and the {\em unstable bundle}, $\|Tf^{-1}|_{E^u}\| < 1$. The bundle $E^{c}$ is called {\em center bundle}.\par
It is a well-known fact that the {\em strong} bundles, 
$E^s$ and $E^u$, are uniquely integrable \cite{bp,hps}.  That is, there are invariant strong foliations ${\mathcal W}^{s}$ and ${\mathcal W}^{u}$ tangent, respectively to the invariant bundles $E^{s}$ and $E^{u}$. 
(However, the integrability of $E^{c}$ is a more delicate matter)\par
In this paper we will deal with partially hyperbolic diffeomorphisms that satisfy certain type of accessibility property. Recall that such a diffeomorphism $f$ satisfies the accessibility property (we will also say that $f$ is accessible) if any pair of points can be joined by  curve that is piecewise tangent to either $E^s$ or $E^u$. We will say that $f$ has the {\em three-legged accessibility property} if in the definition of accessibility you can choose the curve joining each pair of points consisting of three arcs tangent to either $E^s$ or $E^u$ and with uniformly bounded length. Moreover, we will say that $f$ is {\em $sus$-accessible} if it satisfies the  three-legged accessibility property and the three-legged curve of the definition can be chosen, for all pair of points,  in such a way that the first arc is stable, the second unstable and the last one stable. The $usu$-accessibility property is defined in a analogous way.

These more restrictive accessibility properties impose some limitations to the strong foliations of a partially hyperbolic diffeomorphism.  Our first result is in this direction and states the following:
\begin{theorem}\footnote{The authors jointly with Federico Rodriguez Hertz already had a proof of this result more than ten years ago.} \label{mintrans} Let $f\in C^{1}$ be an $usu$-accessible partially hyperbolic diffeomorphism. Then,
\begin{enumerate}
\item  ${\mathcal W}^{u}$ has a unique minimal set. 
\item ${\mathcal W}^{s}$ is transitive, that is, it has a dense leaf. 
\end{enumerate}
\end{theorem}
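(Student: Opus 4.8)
The plan rests on a single mechanism. The three‑legged curves have total length bounded by a uniform constant, say $L$. If I take such a curve joining $f^{-n}(p)$ to $f^{-n}(q)$ and push it forward by $f^{n}$, I obtain a $usu$‑curve joining $p$ to $q$ whose middle (stable) leg has been contracted by a factor $\le\rho^{n}$, where $\rho:=\sup_{x}\|Df|_{E^{s}_{x}}\|<1$; dually, pulling back by $f^{n}$ a curve joining $f^{n}(p)$ to $f^{n}(q)$ gives a $usu$‑curve from $p$ to $q$ whose two outer (unstable) legs have been contracted by a factor $\le\sigma^{n}$, where $\sigma:=\sup_{x}\|Df^{-1}|_{E^{u}_{x}}\|<1$. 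In both cases the legs that are \emph{not} contracted are arcs of ${\mathcal W}^{u}$, so their endpoints remain on the unstable leaves of $p$ and of $q$. Both parts of the theorem follow by reading off where these endpoints can be.

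\emph{Part (1).} I will use the standard facts that every nonempty closed ${\mathcal W}^{u}$‑saturated set contains a ${\mathcal W}^{u}$‑minimal set, that a ${\mathcal W}^{u}$‑minimal set is the closure of each of its leaves, and that two ${\mathcal W}^{u}$‑minimal sets coincide or are disjoint. Since $\overline{{\mathcal W}^{u}(x)}$ is closed and ${\mathcal W}^{u}$‑saturated, minimal sets exist, and it suffices to show that any two of them, $\Lambda_{1}$ and $\Lambda_{2}$, meet. Fix $p\in\Lambda_{1}$, $q\in\Lambda_{2}$, and for each $n$ apply the first operation above to a bounded $usu$‑curve from $f^{-n}(p)$ to $f^{-n}(q)$: it produces points $a_{n},b_{n}$ (the ends of the stable leg) with $a_{n}\in{\mathcal W}^{u}(p)\subseteq\Lambda_{1}$, $b_{n}\in{\mathcal W}^{u}(q)\subseteq\Lambda_{2}$ and $\dist(a_{n},b_{n})\le L\rho^{n}\to0$. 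By compactness of $M$, after passing to a subsequence $a_{n}\to a_{*}$, hence $b_{n}\to a_{*}$ as well, so $a_{*}\in\Lambda_{1}\cap\Lambda_{2}$ and $\Lambda_{1}=\Lambda_{2}$.

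\emph{Part (2).} It suffices, by a Baire category argument, to prove that for every pair of nonempty open sets $U,V$ there is a stable leaf meeting both; indeed, fixing a countable basis $\{U_{i}\}$ of nonempty open sets, this makes each ${\mathcal W}^{s}(U_{i})$ open and dense, and every point of the residual set $\bigcap_{i}{\mathcal W}^{s}(U_{i})$ then has a dense stable leaf. Given $U,V$, pick $p\in U$, $q\in V$, and $\eps>0$ so small that every point joined to $p$ (resp. to $q$) by an unstable arc of length $\le\eps$ lies in $U$ (resp. in $V$) — possible because such an arc stays in the $\eps$‑ball. Apply the second operation above with $n$ chosen so that $L\sigma^{n}\le\eps$: the ends of the stable (middle) leg of the resulting $usu$‑curve are a point $a\in U$ on ${\mathcal W}^{u}(p)$ and a point $b\in V$ on ${\mathcal W}^{u}(q)$, lying on a common stable leaf. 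Thus ${\mathcal W}^{s}(a)$ meets both $U$ and $V$, as required.

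The substantive content is entirely in the length bookkeeping of the first paragraph, so that is the step I would write out most carefully: one must check that $f^{\pm n}$ carries a piecewise $E^{s}/E^{u}$ curve to a piecewise $E^{s}/E^{u}$ curve (clear from invariance of the bundles) and that the length of an $E^{s}$‑arc is multiplied by at most $\rho^{n}$ under $f^{n}$, and that of an $E^{u}$‑arc by at most $\sigma^{n}$ under $f^{-n}$, uniformly over $M$ — this is where compactness of $M$ and continuity of the invariant splitting enter. Everything else (openness of the saturation of an open set, existence of minimal sets, the dichotomy for minimal sets, and the Baire argument) is standard for continuous foliations with $C^{1}$ leaves. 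I should also note that this argument genuinely uses the \emph{uniform} length bound on the three‑legged curves: it is what allows the stable (resp. unstable) legs to be made arbitrarily short along the whole sequence of iterates.
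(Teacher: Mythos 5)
Your proof is correct and follows essentially the same route as the paper: forward iteration shrinks the stable middle leg to show any two closed $u$-saturated sets (in your case, two minimal sets) must meet, and backward iteration shrinks the two unstable legs to produce a stable leaf joining any two open sets. The only differences are cosmetic — the paper concludes part (1) via the finite intersection property of all closed $u$-saturated sets rather than the coincide-or-disjoint dichotomy for minimal sets, and it leaves the Baire argument of part (2) implicit where you spell it out.
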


The importance of the uniqueness of the minimal sets of ${\mathcal W}^{u}$ lies in the fact that it imposes important obstructions to the presence of attractors, in fact, under certain conditions they are unique. 
We will establish this result in the following theorem.

\begin{theorem}\label{ugibbs} Let $f$ be a $C^{1+\alpha}$ partially hyperbolic diffeomorphism and assume that $\cW^u(f)$ has a unique minimal set $\Lambda$. If there is an $u$-Gibbs measure $\mu$ supported on $\Lambda\,$ for which all center Lyapunov exponents are negative then, $\mu$ is the unique $u$-Gibbs measure for $f$ and, as a consequence, it is SRB.

\end{theorem}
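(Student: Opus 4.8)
The plan is to show that the unique minimal set for $\cW^u$ forces every $u$-Gibbs measure to be supported on $\Lambda$, and then use the negativity of the center exponents to collapse uniqueness and identify the measure as SRB. First I would recall the standard structural facts about $u$-Gibbs measures in the $C^{1+\alpha}$ setting: any $u$-Gibbs measure $\nu$ has conditional measures along unstable leaves absolutely continuous with respect to the induced Riemannian (Lebesgue) volume, and the support of a $u$-Gibbs measure is $\cW^u$-saturated (if $x\in\supp\nu$ then $\cW^u(x)\subset\supp\nu$). Consequently $\supp\nu$ contains a minimal set of $\cW^u$; since by hypothesis $\Lambda$ is the \emph{unique} such minimal set, we get $\Lambda\subset\supp\nu$ for every $u$-Gibbs measure $\nu$. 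This is the first key step, and it is essentially soft.

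Next I would bring in the hypothesis that $\mu$ is a $u$-Gibbs measure supported on $\Lambda$ with all center Lyapunov exponents negative. Combined with the negativity of the stable exponents (automatic from partial hyperbolicity), $\mu$ has negative exponents in all non-unstable directions, so by the Ledrappier–Young / Ledrappier characterization, a $u$-Gibbs measure with negative center exponents is in fact an SRB measure: its basin $B(\mu)$ has positive Lebesgue measure, and moreover it is an ergodic hyperbolic measure (after passing to an ergodic component, which one checks is again $u$-Gibbs with the same exponent signs, hence also SRB). For such a measure, Pesin theory applies: $\mu$-a.e.\ point has a Pesin stable manifold containing the whole unstable leaf through it is not what we want — rather, the \emph{unstable} manifolds are the strong unstable leaves $\cW^u$, and the Pesin stable manifolds $W^s_{\mathrm{Pesin}}$ (tangent to $E^s\oplus E^c$) have uniformly positive size on a positive-measure set. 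The absolute continuity of the Pesin stable lamination then gives that the union of strong unstable leaves through $\supp\mu=\Lambda$, thickened by Pesin stable manifolds, has positive Lebesgue measure and lies in $B(\mu)$.

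The heart of the argument is then to show there can be no \emph{other} $u$-Gibbs measure. Suppose $\nu\neq\mu$ is $u$-Gibbs; by the first step $\Lambda\subset\supp\nu$. I would argue that $\nu$-almost every point is attracted, under forward iteration, toward $\Lambda$ in a way governed by the negative center exponents. Concretely: for a typical point $x$ of any $u$-Gibbs measure, its forward orbit visits a neighborhood of $\Lambda$; using the negative center exponents of $\mu$ on $\Lambda$ (and a domination / cone argument to propagate this contraction to a neighborhood of $\Lambda$), one shows that the strong-unstable plaque through such a return iterate is exponentially attracted to the strong-unstable leaves inside $\Lambda$. Since the conditional measures of $\nu$ along unstable leaves are equivalent to Lebesgue, and Lebesgue-almost every point of an unstable plaque accumulating on $\Lambda$ lies in the Pesin stable manifold of some point of $\Lambda$, we conclude $\nu$-a.e.\ point lies in $B(\mu)$. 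But $B(\mu)\cap B(\nu)=\varnothing$ when $\mu\neq\nu$ (distinct ergodic measures have disjoint basins of typical points for Birkhoff averages), forcing $\nu=\mu$. Hence $\mu$ is the unique $u$-Gibbs measure, and being SRB was already established.

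The main obstacle I anticipate is the propagation step: passing from ``$\mu$ has negative center exponents on $\Lambda$'' to ``a full neighborhood of $\Lambda$ (or at least $\nu$-almost every unstable plaque that returns near $\Lambda$) is genuinely contracted in the center direction, with enough uniformity to land in Pesin stable manifolds of positive size.'' This requires care because negative center exponents at $\mu$-a.e.\ point need not hold pointwise everywhere on $\Lambda$, let alone on a neighborhood; one typically handles this by working with a large-measure Pesin block, invoking the uniform contraction there, and using recurrence together with the absolute continuity of the unstable foliation (the $u$-Gibbs property of $\nu$) to guarantee that $\nu$-a.e.\ unstable plaque meets such a block inside $\Lambda$ infinitely often. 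Controlling the sizes of Pesin stable manifolds along these returns, and the Hölder continuity / absolute continuity estimates needed to transfer ``positive Lebesgue measure within the plaque'' into the basin, is the technically delicate part; the rest of the argument is a fairly standard assembly of $u$-Gibbs theory, Pesin theory, and the basin-disjointness of distinct ergodic measures.
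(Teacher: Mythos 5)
Your proposal is correct and follows essentially the same route as the paper's (much terser) proof: use the uniqueness of the minimal set to force unstable leaves/plaques to accumulate on $\Lambda$, then combine the negative center exponents of $\mu$ with Pesin theory and the absolute continuity of the stable partition to place a positive leaf-Lebesgue-measure set in the basin of $\mu$, which rules out any other $u$-Gibbs measure. In effect you have filled in the details (Pesin blocks, recurrence, equivalence of $\nu$-conditionals with leaf Lebesgue, basin disjointness) that the paper compresses into ``the absolute continuity of the stable partition implies \dots this clearly implies uniqueness.''
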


Before showing some applications of Theorem \ref{mintrans}  we give some necessary conditions to obtain $sus$-accessibility. 
\begin{theorem}\label{robustness}
Let $f\in C^{1}$ be an accessible partially hyperbolic diffeomorphism with one-dimensional center such that ${\mathcal W}^{u}$ is minimal. Then, there is an open $C^1$-neighborhood $\cU$ of $f$ such that every $g\in\cU$ is $usu$-accesible.

\end{theorem}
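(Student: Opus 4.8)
The plan is to isolate two ingredients, each robust under $C^1$-perturbations, and then glue them together by absorbing a long unstable arc into the first leg of a short local $usu$-path.

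First I would make the minimality of $\cW^u$ quantitative, since that is what survives a perturbation. As $\cW^u(f)$ is minimal and $M$ compact, there are $\e_0,L_0>0$ such that for every $z\in M$ the unstable plaque $\cW^u_{L_0}(z;f)$ (intrinsic radius $L_0$ about $z$) is $\e_0$-dense in $M$. The strong unstable foliation varies continuously with the diffeomorphism in the $C^1$ topology, uniformly on compact plaques, so there is a $C^1$-neighbourhood $\cU_1$ of $f$ on which $\cW^u_{L_0}(z;g)$ is $2\e_0$-dense for every $z\in M$ and every $g\in\cU_1$. Note that I do \emph{not} claim $\cW^u(g)$ to be minimal, only this uniform $2\e_0$-density of bounded plaques, which is all that the gluing step needs.

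Second — and this is where both accessibility and $\dim E^c=1$ enter — I would prove a uniform, robust \emph{local} $usu$-accessibility: there exist $\delta>0$, $R>0$ and a $C^1$-neighbourhood $\cU_2$ of $f$ such that for every $g\in\cU_2$ and every $p\in M$ the set of points joined to $p$ by a $g$-$usu$-path whose three legs all have length $\le R$ contains a ball of radius $\delta$. The first step is to observe that the set of points through which $\cW^s$ and $\cW^u$ are jointly integrable (admit a common local integral manifold tangent to $E^s\oplus E^u$) is closed and $\cW^u$-saturated; accessibility makes it a proper subset, and minimality of $\cW^u$ then forces it to be empty, so $\cW^s$ and $\cW^u$ are \emph{nowhere} jointly integrable. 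The second step is an infinitesimal computation for the $usu$-evaluation map $\Phi_p(t_1,\sigma,t_2)$ given by moving $t_1$ along $\cW^u(p)$, then $\sigma$ along $\cW^s$, then $t_2$ along $\cW^u$ (set up, when $E^c$ is not integrable, inside a ball of uniform radius using Burns--Wilkinson fake foliations): its differential at the origin is degenerate because its first and last blocks both land in $E^u$, but, since $\cW^s$ and $\cW^u$ are nowhere jointly integrable and $\dim E^c=1$, the center-shear of the stable holonomy is nonzero, so $\Phi_p$ attains rank $\dim M$ at points arbitrarily close to the origin — three legs in the order $usu$ suffice precisely because only a single transverse direction, the center one, has to be recovered. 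Compactness of $M$ turns this into uniform $\delta,R$, and, since the holonomies entering $\Phi_p$ and the rank condition vary continuously with $g$ in $C^1$ while the underlying open-mapping estimate is quantitative, the statement persists on a neighbourhood $\cU_2$.

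Finally I would glue. Set $\cU=\cU_1\cap\cU_2$, shrunk so that $2\e_0<\delta$, and fix $g\in\cU$ and $x,y\in M$. Applying the second ingredient to $p=y$ gives a ball $B\subset M$ of radius $\delta$ each of whose points is joined to $y$ by a $g$-$usu$-path of length $\le 3R$; since $\cW^u_{L_0}(x;g)$ is $2\e_0$-dense it meets $B$, say at a point $a$. Let $\eta$ be such a $usu$-path from $a$ to $y$; its first leg runs along $\cW^u(a)=\cW^u(x)$, so concatenating the unstable arc from $x$ to $a$ inside $\cW^u(x)$ with $\eta$ merges the two initial unstable legs and produces an $usu$-path from $x$ to $y$ of total length $\le L_0+3R$. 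As this bound is independent of $g\in\cU$ and of $x,y$, every $g\in\cU$ is $usu$-accessible.

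The hard part will be the second ingredient, and within it the quantitative and uniform aspects: upgrading the rank-$\dim M$ statement for $\Phi_p$ to constants uniform in $p$ and stable under perturbation forces one to deal with the merely H\"older transverse regularity of $E^s$ and $E^u$ (the maps $\Phi_p$ are $C^1$ only because one composes along genuine leaves and reads off the outcome on suitable transversals, as is customary in the accessibility literature), and to verify that the non-joint-integrability produced from minimality yields a shear bounded away from zero over the compact manifold which remains so after a small $C^1$-perturbation — even though $\cW^u(g)$ itself need no longer be minimal.
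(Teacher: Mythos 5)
Your overall architecture (a quantitative, perturbation-stable version of minimality for bounded unstable plaques, a robust ``local $usu$'' ingredient, and a gluing step that absorbs the unstable arc from $x$ into the first leg) is reasonable, and the first ingredient and the gluing are fine. The genuine gap is your second ingredient. The rank/differential argument for the evaluation map $\Phi_p$ is not available at this regularity: for a $C^1$ (indeed even for a $C^\infty$) partially hyperbolic diffeomorphism the stable holonomy between unstable transversals is in general only continuous (H\"older at best, and that already needs more than $C^1$ plus bunching), the bundles $E^s,E^c,E^u$ are only continuous, and $\Phi_p$ has no transverse differentiability; a ``center-shear of the stable holonomy'' is not a well-defined, continuously varying quantity here, so one cannot assert it is nonzero, bounded away from zero on the compact manifold, or persistent under $C^1$ perturbation. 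Even granting smoothness, ``nowhere joint integrability'' is qualitative: at each point it produces some $su$-quadrilateral at some scale that fails to close, not a nonvanishing derivative-type shear, so the compactness upgrade to uniform $\delta,R$ does not follow as stated. In addition, the preliminary claim that the joint-integrability locus is closed and $\cW^u$-saturated is asserted rather than proved (closedness needs uniform size of the local integral manifolds, and $u$-saturation needs the nontrivial fact that a disk tangent to $E^s\oplus E^u$ contains local strong plaques of its points).

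For contrast, the paper bypasses all of this with a purely topological and much weaker input: accessibility with one-dimensional center yields \emph{one} pair of skew unstable disks $U_1,U_2$ (the stable holonomy image of $U_1$ crosses $U_2$ inside a disk of dimension $\dim E^u+1$), a fact quoted from Didier and from Rodriguez Hertz--Rodriguez Hertz--Ures; being skew is an open condition, robust both in the disks and in the diffeomorphism, and the crossing forces a stable arc of length at most $\e$ joining a point of any unstable disk near $U_1$ to a point of any unstable disk near $U_2$. Minimality of $\cW^u(f)$ gives one constant $C$ such that $W^u_C(z)$ enters prescribed neighborhoods of both disks for every $z$, and continuity of the unstable foliation in the $C^1$ topology makes this persist for nearby $g$; concatenating $x\to U_1$ (unstable, length $\le C$), the short stable arc, and $U_2\to y$ (unstable, length $\le C$) gives $usu$-accessibility with $K=2C+\e$. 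Note this needs no reachable ball at every point: if you replace your second ingredient by this single robust crossing configuration and send $W^u_C(x)$ near $U_1$ and $W^u_C(y)$ near $U_2$, your outline becomes essentially the paper's proof; as written, however, the second ingredient is a genuine gap.
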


Of course we have the analogous result if ${\mathcal W}^{s}$ is minimal. In fact, in their pioneer paper about stable ergodicity \cite{gps}, Grayson, Pugh and Shub showed that the time-one map $\varphi$ of the geodesic flow of a closed surface of constant negative curvature is both $sus$- and $usu$-accessible. Dolgopyat \cite{dolgo} used this property to get some consequences about the uniqueness of SRB measures in a neighborhood of $\varphi$. In Section \ref{anosovflows}
we show that most time-one maps of  Anosov flows, including the case of geodesic flows, are both $sus$- and $usu$-accessible. This generalizes Burns, Pugh and Wilkinson \cite{bpw} result about the accessibility of Anosov flows. Also, as an application of Theorem \ref{mintrans} we obtain a generalization of the main result of Bonatti and Guelman \cite{bg} about the approximation of time-one maps of  Anosov flows by Axiom A diffeomorphisms.

We also give some applications to skew products (Section \ref{skewproducts}) and to Anosov diffeomorphisms (Section \ref{anosovdiffeomorphisms}). In the case of skew products we show that our results can be applied to an open and dense set of isometric circle extensions of volume-preserving partially hyperbolic diffeomorphisms.   In the case of Anosov diffeomorphisms, we answer positively the following conjecture of Gogolev, Maimon and Kolgomorov \cite{gmk}.  Let $A$ be a hyperbolic automorphism of the 3-torus with three real eigenvalues $|\lambda_1|<1<|\lambda_2|<|\lambda_3|$.

\begin{conjecture}\label{strong_trans}
For all analytic diffeomorphisms $f$
in a sufficiently small neighborhood of $A$ the strong unstable
foliation $\cW^{uu}$
is transitive, i.e., it has a dense leaf.
\end{conjecture}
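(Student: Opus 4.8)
The plan is to derive transitivity of $\cW^{uu}_f$ from $sus$-accessibility via Theorem \ref{mintrans}, to produce $sus$-accessibility from Theorem \ref{robustness} whenever $f$ is accessible, and to treat the non-accessible case separately. Throughout, $f$ is analytic and $C^1$-close to $A$, hence Anosov, with a partially hyperbolic splitting $E^s_f\oplus E^c_f\oplus E^u_f$ into one-dimensional bundles, the continuations of $E^{\lambda_1},E^{\lambda_2},E^{\lambda_3}$; thus $E^c_f$ is one-dimensional, $E^u_f$ is the continuation of the strongest bundle $E^{\lambda_3}$, the foliation $\cW^{uu}$ of the conjecture equals $\cW^u_f$, and $\cW^{ss}_f=\cW^s_f$. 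The characteristic polynomial of $A$ is irreducible over $\mathbb{Q}$ — a rational root would be a unit $\pm1$, excluded by $|\lambda_1|<1<|\lambda_2|<|\lambda_3|$ — so the lines $\R v_{\lambda_1}$ and $\R v_{\lambda_3}$ are irrational and $\cW^{ss}_A,\cW^{uu}_A$ are minimal; note, however, that $A$ itself is \emph{not} accessible — its $su$-accessibility classes are the dense yet proper affine planes tangent to $E^{\lambda_1}\oplus E^{\lambda_3}$ — so Theorem \ref{robustness} cannot be applied at $A$ directly.

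Assume first that $f$ is accessible. One further needs $\cW^{ss}_f$ minimal; minimality of the strong stable foliation is $C^1$-robust near $A$ (this robust minimality is known for perturbations of an irreducible linear Anosov automorphism of $\T^3$ with one-dimensional centre), so after shrinking the neighbourhood the hypotheses of the $\cW^s$-minimal version of Theorem \ref{robustness} — the analogue stated right after it, equivalently Theorem \ref{robustness} applied to $f^{-1}$ — are met by $f$, whence $f$ is $sus$-accessible. Since $f$ is $sus$-accessible precisely when $f^{-1}$ is $usu$-accessible (reverse the paths and interchange $\cW^s\leftrightarrow\cW^u$), and since $\cW^s_{f^{-1}}=\cW^u_f$, applying Theorem \ref{mintrans} to $f^{-1}$ gives that $\cW^u_f=\cW^{uu}_f$ is transitive. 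This already handles an open, indeed $C^1$-open, set of $f$'s.

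Now assume $f$ is not accessible; here analyticity enters. By the structure theory of accessibility classes in dimension three with one-dimensional centre, the union $\Gamma$ of the non-open accessibility classes of $f$ is a non-empty closed, $f$-invariant, $\cW^s$- and $\cW^u$-saturated lamination by $C^1$ surfaces tangent to $E^s_f\oplus E^u_f$; for $f$ close to $A$ these surfaces are quasi-planes modelled on the planes of the linear $su$-foliation of $A$, and one argues that $\Gamma$ must be all of $\T^3$, i.e. that $E^s_f\oplus E^u_f$ is jointly integrable, giving an $f$-invariant codimension-one foliation $\cF_f$ tangent to it. Invariance under the Anosov map $f$ prevents Reeb components, so $\cF_f$, being also $C^0$-close to the linear model, is topologically equivalent to the linear foliation of $\T^3$ by planes tangent to $E^{\lambda_1}\oplus E^{\lambda_3}$; in particular all its leaves are dense. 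Using analyticity together with the rigidity of Anosov diffeomorphisms of $\T^3$ with real spectrum whose strong foliations are jointly integrable, $f$ turns out to be conjugate to $A$, so $\cW^{uu}_f$ is minimal, hence transitive; alternatively one tracks a $\cW^{uu}_f$-leaf through the topological equivalence with the linear model, where it becomes a line in the irrational direction $v_{\lambda_3}$, and concludes density directly.

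I expect the main obstacle to be the non-accessible case: one must rule out proper $su$-sublaminations for perturbations of $A$, so that failure of accessibility genuinely forces global joint integrability, and then upgrade the purely topological description of $\cF_f$ to the rigidity needed to get density of the $\cW^{uu}_f$-leaves — and this is exactly where the analyticity hypothesis of the conjecture is used. The accessible case is by comparison routine: it combines Theorems \ref{robustness} and \ref{mintrans} once robust minimality of $\cW^{ss}$ near $A$ is available, and there $C^1$ regularity already suffices.
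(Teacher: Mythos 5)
Your accessible case is essentially the paper's argument: minimality of the strong stable foliation (which for these Anosov maps is the full stable foliation, hence minimal via the Franks conjugacy with $A$ -- no external ``robust minimality'' theorem is needed), then Theorem \ref{robustness} with the roles of $s$ and $u$ reversed to get $sus$-accessibility, then Theorem \ref{mintrans} applied to $f^{-1}$ to get transitivity of $\cW^u(f)$. That part is correct.

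The genuine gap is the non-accessible case. There the paper does not argue via laminations and rigidity; it quotes the theorem of Ren, Gan and Zhang \cite{rgz}, which in this exact setting (Anosov on $\T^3$ with a partially hyperbolic splitting) states the equivalence of: $f$ not accessible, $E^s\oplus E^u$ integrable, and the conjugacy $h$ sending strong unstable leaves of $f$ to strong unstable leaves of $A$. The third item immediately yields minimality, hence transitivity, of $\cW^u(f)$. In your sketch the two crucial steps are asserted rather than proved: ``one argues that $\Gamma$ must be all of $\T^3$'' (i.e.\ that failure of accessibility forces global joint integrability -- this is not a general fact about one-dimensional-center partially hyperbolic systems and is precisely part of the content of \cite{rgz}), and the passage from joint integrability to density of strong unstable leaves. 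Your alternative route -- pushing a $\cW^{uu}_f$-leaf through a topological equivalence of the two-dimensional $su$-foliation with the linear one -- does not work as stated, since such an equivalence has no reason to send the one-dimensional strong unstable leaves inside the $su$-leaves to lines in the direction $v_{\lambda_3}$; again, that $h$ respects strong unstables is exactly the nontrivial conclusion of \cite{rgz}. Finally, your expectation that analyticity is where the non-accessible case is rescued is not how the paper proceeds: with \cite{rgz} in hand the result holds for every $C^1$ Anosov diffeomorphism with such a splitting, so the conjecture is obtained in a stronger, purely $C^1$, form with analyticity playing no role.
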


In fact, we get a stronger result. We obtain the transitiveness of $\cW^{uu}$ for any 3-dimensional $C^1$ Anosov diffeomorphism $f$ with a partially hyperbolic splitting.

\section{Preliminaries}\label{section.preliminaries}

\subsection{Partial hyperbolicity}
\begin{definition} A diffeomorphism $f$ of a closed manifold $M$ is {\em partially
hyperbolic} if the tangent bundle $TM$ of $M$, splits into three invariant sub-bundles: 
$TM=E^{s}\oplus E^{c}\oplus E^{u}$
such that  all unit vectors
$v^\s\in E^\s_x$ ($\s= s, c, u$) with $x\in M$ satisfy  :
\begin{equation}\label{pointwise.ph}
 \|T_xfv^s\| < \|T_xfv^c\| < \|T_xfv^u\| 
\end{equation}
for some suitable Riemannian metric. The {\em stable bundle} $E^{s}$ must also satisfy
$\|Tf|_{E^s}\| < 1$ and the {\em unstable bundle}, $\|Tf^{-1}|_{E^u}\| < 1$. The bundle $E^{c}$ is called {\em center bundle}.

We also call $E^{cu}=E^c\oplus E^u$ and $E^{cs}=E^c\oplus E^s$

\end{definition}

It is a well-known fact that the {\em strong} bundles, 
$E^s$ and $E^u$, are uniquely integrable \cite{bp,hps}.  That is, there are invariant strong foliations ${\mathcal W}^{s}(f)$ and ${\mathcal W}^{u}(f)$ tangent, respectively,  to the invariant bundles $E^{s}$ and $E^{u}$. 
(However, the integrability of $E^{c}$ is a more delicate matter) In general,  we will call ${\mathcal W}^{\sigma}(f)$  any foliation tangent to $E^\sigma$, $\sigma=s,u, c, cs, cu$, whenever it exists and $W^\sigma_f(x)$ the leaf of ${\mathcal W}^{\sigma}(f)$ passing through $x$. A subset $\Lambda$ is {\em $\sigma$-saturated} if $W^\sigma_f(x)$ for every $x\in \Lambda$. A closed $\sigma$-saturated subset $K$ is {\em minimal} if $\overline{W^\sigma_f(x)}=\Lambda$ for every $x\in \Lambda$.  We say that a foliation is minimal if $M$ is a minimal set for it. A foliation is {\em transitive} if it it has a dense leaf. \par

\subsection{Skew products} 
In this subsection we consider skew products  for which the base is a volume-preserving Anosov diffeomorphism and the fibers are circles (also known as isometric circle extensions) 
That means that we have $F_\vfi:M\times \mathbb S^1\to M\times \mathbb S^1$ such that $F_\vfi(x, \theta)= (f(x), R_{\vfi(x)}\theta)$ where $f:M\to M$ is a $C^r$ volume-preserving Anosov diffeomorphisms, $R_{\alpha}$ is the rotation of angle $\alpha$ and $\vfi:M\to\mathbb S^1$ is a $C^r$ map. $r\geq2$. These diffeomorphisms are partially hyperbolic, see \cite{bp}. Since $f$ is volume preserving we have that $F_\vfi$ preserves the measure given by the product of the volume of $M$ and the Lebesgue measure on $\mathbb S^1$ (the Haar measure of $\mathbb S^1$ seen as a Lie group)

The following is proved in \cite{bp} (see also Proposition 2.1 in \cite{bw})
The center bundle of $F_\vfi$ is tangent to the circle fibers and the strong stable and strong unstable bundles have the same dimension as the corresponding bundles of the base Anosov diffeomorphism. In particular, it has  a center foliation and its leaves are the circle fibers. $F_\vfi$ is dynamically coherent, that means that there are  invariant foliations $\cW^{c\sigma}(F_\vfi)$ tangent  tangent to $E^{c\sigma}$, $\sigma = s, u$. Each leaf of $\cW^{c\sigma}(F_\vfi)$ is the preimage under $\pi$ of a leaf of $\cW^{\sigma}(f)$, $\sigma= s, u$ ($\pi:M\times \mathbb S^1\to M$ is the projection on the first coordinate)  In particular, any leaf of $\cW^{c\sigma}(F_\vfi)$ is dense and it is the product of a leaf of $\cW^{\sigma}(f)$ and $\mathbb S^1$ again $\sigma=s,u$. Each leaf of a strong foliation is 
 is the graph of a $C^r$ function from the corresponding  leaf of the corresponding strong foliation of $f$ to $\mathbb S^1$.

\subsection{$u$-Gibbs measures}
We will only give a very brief introduction to the $u$-Gibbs measures with the purpose of doing this paper as self-contained as possible. For a more complete presentation we refer to \cite{bdp, dolgo2}.

\begin{definition} Let $f$ be a $C^{1+\alpha}$ partially hyperbolic diffeomorphism. An $f$-invariant probability measure $\mu$ is $u$-Gibbs if it admits conditional measures along local strong unstable leaves which are absolutely continuous with respect to Lebesgue. 

\end{definition}

The condition that $f$ is $C^{1+\alpha}$ is needed in order to get the absolute continuity of the strong unstable foliation.

The densities of the conditional measures are bounded away from zero and infinity and then, the support of a $u$-Gibbs measure consists of entire strong unstable leaves.  The set of $u$-Gibbs measures is a convex compact subset of the set of probability measures. 

Recall that an {\em SRB measure} is an invariant probability measure such that admits conditional measures along unstable manifolds which are absolutely continuous with respect to Lebesgue. Observe the difference with $u$-Gibbs measures for which these conditional measures are taken along {\bf strong} unstable leaves. SRB measures of partially hyperbolic diffeomorphisms are always $u$-Gibbs measures. In general the converse is not true but if $f$ admits a unique $u$-Gibbs measure $\mu$ then, $\mu$ is SRB (see, for instance, \cite[Theorem 11.16]{bdp})

\section{Proofs of the main results}\label{proofs}

\subsection{Proof of Theorem \ref{mintrans}}

\begin{proof}[Proof of a)]
Let $A,\,B\subset M$ be two closed $u$-saturated sets, $a\in A$, $b\in B$ and $K$ a bound of the length of the curves given by the definition of $usu$-accessibility. 
Let $n\in \NN$ and $\gamma$ a curve of length less than $K$ joining $f^{-n}(a)$ and $f^{-n}(b)$ consisting of three arcs $\gamma_i$ $i=1,2,3$ such that $\gamma_1$ and $\gamma_3$ are tangent to $E^u$ and $\gamma_2$ is tangent to $E^s$. Observe that, since $A$ and $B$ are $u$-saturated, $\gamma_1\subset f^{-n}(A)$ and $\gamma_3\subset f^{-n}(B)$. On the one hand, the previous observation gives that $f^n(\gamma_2)$ is an arc joining $A$ and $B$. On the other hand,  since $\gamma_2$ is tangent to $E^s$, we have that the length of  $f^n(\gamma_2)$ is less than $K \lambda^n$ for some uniform $\lambda<1$. Thus, the distance between $A$ and $B$ is less than $K \lambda^n$ for all $n\in \NN$. This implies that $A\cap B\neq \emptyset$. We have that any pair of closed $u$-saturated subsets has nonempty intersection and since the intersection of $u$-saturated sets is an $u$-saturated set we obtain that the family of all closed $u$-saturated sets satisfies the FIP. This implies that $S=\bigcap \{A\subset M; \,\,A \text{ is closed and $u$-saturated}\}\neq \emptyset$. It is not difficult to see that $S$ is the unique minimal set of ${\mathcal W}^{u}$. This ends the proof of the first part of Theorem \ref{mintrans}.
\end{proof}
\begin{remark} Observe that since $f$ is a diffeomorphism and ${\mathcal W}^{u}$ is $f$-invariant we have that the minimal set given by Theorem \ref{mintrans} is $f$-invariant.

\end{remark}

\begin{proof}[Proof of b)]
Let $U,\, V\subset M$ be to open sets. It is enough to show that there is a leaf of  ${\mathcal W}^{s}$ that has nonempty intersection with both $U$ and $V$. 

Take $x\in U$, $y\in V$ and $n\in \NN$. Then, there is  curve $\gamma$ of length less than $K$ joining $f^{n}(x)$ and $f^{n}(y)$ consisting of three arcs $\gamma_i$ $i=1,2,3$ such that $\gamma_1$ and $\gamma_3$ are tangent to $E^u$ and $\gamma_2$ is tangent to $E^s$. Since $\gamma_1$ and $\gamma_3$ are tangent to $E^u$ we have that the length of $f^{-n}(\gamma_i)$ $i=1,3$ is less than $K\lambda^n$ for for some uniform $\lambda<1$. Observe that $x$ and $y$ are extreme points of the arcs $f^{-n}(\gamma_1)$ and $f^{-n}(\gamma_3)$ respectively and the other two extremes are in the same stable leaf. Since $K\lambda^n\to 0$
we have that for $n$ large enough $f^{-n}(\gamma_1)\subset U$ and $f^{-n}(\gamma_3)\subset V$. Then, there is a stable leaf intersecting simultaneously $U$ and $V$. This finishes the proof of the second part of Theorem \ref{mintrans}.
\end{proof}

\subsection{Proof of Theorem \ref{robustness}}

In order to proof Theorem \ref{robustness} we need to introduce a new definition. In this subsection we will assume that $\dim(E^c)=1$.
\begin{definition}
We say that two unstable disks $U_1$ and $U_2$ are skew iff  
\begin{enumerate}
\item There is a dimension $\dim(E^u)+1$ disk $D$ containing $U_2$.
\item $U_2$ separates $D$ into two connected components.
\item $U_1$ and $D$ define the holonomy map  $h_s:U_1 \to D$ as $h_s(x)=W^s_\e(x)\cap D$ for $\e$ small.
\item $h_s(U_1)$ intersects both connected components of $D\setminus U_2$

\end{enumerate}
\end{definition}

\begin{remark}\label{crossopeness}

\begin{enumerate}
\item Being skew is an open condition in the following sense: if we have two disks $U'_1$ and $U'_2$ close enough to $U_1$ and $U_2$ they are also skew and this remains true if $U'_1$ and $U'_2$ are unstable disks of a diffeomorphism $g$ close enough to $f$.
\item If $U_1$ and $U_2$ are skew there exist $x_i\in U_i$ $i=1,2$ such that $x_2\in W^s_\e(x_1)$.
\end{enumerate}
\end{remark}

\begin{proof}[Proof of Theorem \ref{robustness}]

It is already known that accessibility implies the existence of two unstable disks $U_i$ $i=1,2$ that are skew (see \cite{di, hhu})
Given neighborhoods $V_i$ of $U_i$ $i=1,2$  there exists $C>0$ such that $W^u_C(x)$ intersect both $V_1$ and $V_2$. This implies that given any pair of 
points $x_i\in M$ $i=1,2$, $W^u_C(x_i)$ contains a disk in $V_i$ for $i=1,2$. The continuos dependence of the unstable foliation on the diffeomorphisms imply that the same is true, with the same constant $C$, for any $g$ close enough to $f$ (maybe we have to take $V_i$ $i=1,2$ a little bit larger) So, the two disks are skew (see Remark \ref{crossopeness}) and have two points $y_i$ $i=1,2$ that are joined by a stable curve of length less than $\e$. The previous considerations imply that $y_i$ con be joined to $x_i$ by an unstable arc of length less than $C$, $i=1,2$. That means that $g$ is $usu$-accessible for all $g$ $C^1$ close enough to $f$ with $K=2C+\e$ and finishes the proof of the theorem.
\end{proof}

\subsection{Proof of Theorem \ref{ugibbs}}

\begin{proof}
Let $F$ be a leaf of $\cW^u(f)$. Since $\Lambda$ is the unique minimal set of $\cW^u(f)$ we have that $\Lambda \subset \overline F $. Since $\mu$ is supported on $\Lambda$ and all its center Lyapunov exponents are negative, the absolute continuity of the stable partition implies that $F$ has a positive Lebesgue measure set of points that are in the basin of $\mu$. This clearly implies that $\mu$ is the unique $u$-Gibbs measure for $f$.

\end{proof}

\section{Anosov flows}\label{anosovflows}

In this section we will apply our results to the time-one maps of Anosov flows.
On the one hand, Burns, Pugh and Wilkinson \cite{bpw} (see also \cite{di}) proved that if $\vfi$ is the time-one map of a transitive Anosov flow, accessibility is equivalent to the fact of $E^s\oplus E^u$ not being integrable. On the other hand, Plante \cite{pl} showed that in case $E^s\oplus E^u$ is integrable the flow is topologically equivalent to a suspension of a hyperbolic diffeomorphism. Moreover, in dimension three, joint integrability implies that the flow {\bf is} a suspension. 
These comments lead to the following result. 

\begin{theorem}\label{anosovflows}
Let  $\vfi_t$ be  a transitive Anosov flow and assume that $E^s\oplus E^u$ is not integrable. Then, there is a $C^1$-neighborhood $\cU$ of $\vfi=\vfi_1$ such that the strong unstable and the strong stable  foliations of any $g\in \cU$ are transitive and have a unique minimal set. 
\end{theorem}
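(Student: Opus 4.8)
The plan is to combine the structural results about Anosov flows recalled just before the statement with Theorems \ref{mintrans} and \ref{robustness}. First I would establish that $\vfi=\vfi_1$ is accessible: by the Burns--Pugh--Wilkinson criterion \cite{bpw}, for a transitive Anosov flow accessibility of the time-one map is equivalent to non-integrability of $E^s\oplus E^u$, which is our hypothesis. Since the time-one map of a flow has one-dimensional center (tangent to the flow direction), I would then want to invoke Theorem \ref{robustness}, but that theorem requires one of the strong foliations to be \emph{minimal}, not merely that $\vfi$ be accessible. So the first real step is to upgrade accessibility to minimality of, say, $\cW^u(\vfi)$.

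The key observation here is that transitivity of the Anosov flow forces minimality of the strong foliations once $E^s\oplus E^u$ is non-integrable. Indeed, the non-wandering set of a transitive Anosov flow is the whole manifold, so every weak-unstable leaf is dense; if additionally the strong unstable foliation had a nontrivial minimal set $\Lambda\subsetneq M$, then $\Lambda$ would be a closed $\cW^u$-saturated set, hence saturated under the flow as well (the flow moves strong unstable leaves to strong unstable leaves), so $\Lambda$ would be a closed flow-invariant set; transitivity then gives $\Lambda=M$ unless $\Lambda$ is a periodic orbit, which cannot be strong-unstable saturated when $\dim E^u\ge 1$. Thus $\cW^u(\vfi)$ is minimal, and symmetrically $\cW^s(\vfi)$ is minimal. (Here I am using transitivity of the flow, which is part of the hypothesis; this is the step I expect to need the most care, since one must rule out the periodic-orbit case and confirm that a $\cW^u$-minimal set is genuinely flow-invariant.)

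With minimality of $\cW^u(\vfi)$ in hand, Theorem \ref{robustness} applies directly: there is an open $C^1$-neighborhood $\cU_1$ of $\vfi$ such that every $g\in\cU_1$ is $usu$-accessible. By the symmetric version of Theorem \ref{robustness} (using minimality of $\cW^s(\vfi)$ and reversing the roles of $s$ and $u$), there is also a neighborhood $\cU_2$ on which every $g$ is $sus$-accessible. Set $\cU=\cU_1\cap\cU_2$. Now fix $g\in\cU$. Applying Theorem \ref{mintrans} to the $usu$-accessible diffeomorphism $g$ yields that $\cW^u(g)$ has a unique minimal set and that $\cW^s(g)$ is transitive. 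Applying Theorem \ref{mintrans} with the roles of the strong foliations exchanged — i.e. to the $sus$-accessible map $g$, which is $usu$-accessible for $g^{-1}$ after swapping $E^s\leftrightarrow E^u$ (note that $g$ partially hyperbolic with splitting $E^s\oplus E^c\oplus E^u$ makes $g^{-1}$ partially hyperbolic with splitting $E^u\oplus E^c\oplus E^s$, and a $sus$-path for $g$ is a $usu$-path in this relabeling) — yields that $\cW^s(g)$ has a unique minimal set and $\cW^u(g)$ is transitive. Combining the two applications, for every $g\in\cU$ both strong foliations are transitive and both have a unique minimal set, which is the assertion. The main obstacle, as noted, is the passage from accessibility to minimality of the strong foliations for the time-one map itself; once that is secured, everything else is a direct invocation of the earlier theorems.
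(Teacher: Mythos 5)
Your overall architecture (accessibility via Burns--Pugh--Wilkinson, then minimality of the strong foliations of $\vfi$, then Theorem \ref{robustness} and its $s/u$-symmetric version, then Theorem \ref{mintrans} for every $g$ in the neighborhood) is exactly the paper's route, and the last two steps are fine. The gap is in your second step, the passage from transitivity of the flow to minimality of $\cW^u(\vfi)$. First, a minimal set $\Lambda$ of $\cW^u$ need not be flow-invariant: the flow does carry strong unstable leaves to strong unstable leaves, but that only says each $\vfi_t(\Lambda)$ is again a closed $u$-saturated (indeed minimal) set, not that it equals $\Lambda$; since at this point you do not yet know minimal sets are unique, you cannot conclude invariance. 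Second, even for a flow-invariant closed set, transitivity (a dense orbit) does not force the dichotomy ``all of $M$ or a periodic orbit'': a transitive Anosov flow has a profusion of proper closed invariant sets (periodic orbits, basic subsets, orbit closures), so the alternative you try to rule out is not the only one. A decisive sanity check: your argument never uses non-integrability of $E^s\oplus E^u$ at this step, so it would equally ``prove'' minimality of $\cW^u$ for the suspension of a transitive Anosov diffeomorphism, which is a transitive Anosov flow whose strong unstable foliation is manifestly not minimal (each cross-section fiber is a proper closed $u$-saturated set, and minimal sets sit inside fibers and are not flow-invariant).

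What actually closes the gap, and what the paper relies on, is Plante's theorem \cite{pl} (this is the role of the remarks preceding the statement): for an Anosov flow with non-wandering set equal to $M$, failure of minimality of a strong foliation forces $E^s\oplus E^u$ to be integrable (and the flow to be topologically equivalent to a suspension). Hence the hypothesis of non-integrability, together with transitivity, yields minimality of \emph{both} $\cW^s(\vfi)$ and $\cW^u(\vfi)$; this is a genuine theorem, not a soft consequence of a dense orbit. (Compare Section \ref{skewproducts}, where an analogous non-minimality $\Rightarrow$ joint integrability statement is proved by hand for skew products, precisely because it is not automatic.) Once you replace your minimality argument by the citation of Plante, the remainder of your proposal goes through as written and coincides with the paper's proof.
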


\begin{proof} Since the Anosov flow is transitive and $E^s\oplus E^u$ is not integrable we have that $\vfi$ is accessible and both strong foliations are minimal. Since $\vfi$ is partially hyperbolic with  one dimensional center we can apply Theorem \ref{robustness} that immediately implies the thesis.
\end{proof} 

In particular, any $g\in \cU$ can have at most one transitive hyperbolic attractor (in fact it would be topologically mixing) and one transitive hyperbolic repeller. Then, we obtain  as  corollary the following generalization of the main result of \cite{bg}.

\begin{corollary}
If the time-one map of a transitive Anosov flow $\vfi_t$ is $C^1$-approximated by
diffeomorphisms having more than one transitive hyperbolic attractor, then $E^s\oplus E^u$ is integrable. In particular,  $\vfi_t$ is topologically equivalent to
the suspension of a hyperbolic diffeomorphism. Moreover, if the dimension of the ambient manifold is three then, $\vfi_t$ is the suspension of an Anosov diffeomorphisms.
\end{corollary}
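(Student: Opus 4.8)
The plan is to establish the first assertion by contraposition, reducing it to Theorem~\ref{anosovflows}, and then to read off the topological conclusions from Plante's theorem recalled above. So assume $E^s\oplus E^u$ is \emph{not} integrable. By Theorem~\ref{anosovflows} there is a $C^1$-neighborhood $\cU$ of $\vfi=\vfi_1$ such that, for every $g\in\cU$, the strong unstable foliation $\cW^u(g)$ has a unique minimal set. The heart of the argument is the claim that no $g\in\cU$ can have two distinct transitive hyperbolic attractors.

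To prove this claim I would invoke two classical properties of a transitive hyperbolic attractor $\Lambda$ of a diffeomorphism $g$: (i) $\Lambda$ is closed, $g$-invariant and $u$-saturated, because an attracting hyperbolic set contains the whole strong unstable manifold of each of its points (the usual argument for Anosov diffeomorphisms applies verbatim); and (ii) two distinct transitive hyperbolic attractors are disjoint. Granting these, suppose some $g\in\cU$ had transitive hyperbolic attractors $\Lambda_1\neq\Lambda_2$. Each $\Lambda_i$, being closed and $u$-saturated, contains a minimal set $S_i$ of $\cW^u(g)$ by a routine maximality argument; since $\Lambda_1\cap\Lambda_2=\emptyset$ we get $S_1\neq S_2$, contradicting the uniqueness of the minimal set of $\cW^u(g)$. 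Hence every $g\in\cU$ has at most one transitive hyperbolic attractor.

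Consequently, if $E^s\oplus E^u$ is not integrable then $\vfi$ cannot be $C^1$-approximated by diffeomorphisms having more than one transitive hyperbolic attractor, since any sequence converging to $\vfi$ in the $C^1$ topology eventually lies in $\cU$. This is exactly the contrapositive of the first assertion. Once $E^s\oplus E^u$ is known to be integrable, the remaining statements are quotations of the results recalled at the start of this section: by Plante~\cite{pl}, integrability of $E^s\oplus E^u$ forces $\vfi_t$ to be topologically equivalent to the suspension of a hyperbolic diffeomorphism; and when the ambient manifold is three-dimensional, joint integrability in fact makes $\vfi_t$ itself a suspension, whose cross-section map, being the return map of an Anosov flow on a closed surface, is an Anosov diffeomorphism.

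I expect the only point needing care to be the handling of properties (i) and (ii): the diffeomorphisms $g\in\cU$ are a priori only partially hyperbolic, not Axiom~A, so $u$-saturation and pairwise disjointness must be invoked only for the individual \emph{hyperbolic} attractors that actually occur, where the classical proofs do apply. Beyond that, the corollary is a direct combination of Theorem~\ref{anosovflows} with the facts about Anosov flows recalled at the beginning of this section.
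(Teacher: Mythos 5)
Your proposal is correct and essentially reproduces the paper's argument: by contraposition, non-integrability of $E^s\oplus E^u$ plus Theorem \ref{anosovflows} gives a $C^1$-neighborhood in which $\cW^u(g)$ has a unique minimal set, and since each transitive hyperbolic attractor is a compact $u$-saturated set containing that minimal set while distinct transitive hyperbolic attractors are disjoint, no $g$ in the neighborhood can have more than one; the topological conclusions then follow from Plante as in the paper.
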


\begin{proof}
Since an attractor is a compact $u$-saturated set it necessarily contains the unique minimal set given by Theorem \ref{anosovflows}. 
Since transitive hyperbolic attractors are disjoint $g\in \cU$ can have only one hyperbolic attractor. This proves the corollary.
\end{proof}

In fact, to obtain the thesis of this corollary the only thing we need is the robustness of the uniqueness of the minimal set of the strong unstable foliation. Then, the same proof yields the following theorem. 

\begin{theorem}\label{uniqueattract}
Let $f$ be a stably $usu$-accessible partially hyperbolic diffeomorphism. Then, there is a $C^1$ neighborhood $\cU$ of $f$ such that every $g\in \cU$ has at most one transitive hyperbolic attractor. 

\end{theorem}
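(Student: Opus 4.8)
The plan is to reuse, essentially verbatim, the argument given for the Corollary above: the only feature of time-one maps of transitive Anosov flows used there is that the uniqueness of the minimal set of $\cW^u$ persists on an entire $C^1$-neighborhood, and this is exactly what the hypothesis of being \emph{stably} $usu$-accessible provides.

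First I would unwind the hypothesis: by definition there is a $C^1$-neighborhood $\cU$ of $f$ such that every $g\in\cU$ is $usu$-accessible. Fix such a $g$. Part (a) of Theorem~\ref{mintrans}, applied to $g$, produces a unique nonempty minimal set $S_g$ of $\cW^u(g)$, which moreover (as in the proof of that part) equals $\bigcap\{A\subset M:\ A\ \text{is closed and}\ u\text{-saturated}\}$.

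Next I would check that every transitive hyperbolic attractor $\Lambda$ of $g$ is closed and $u$-saturated, hence belongs to the family defining $S_g$. Indeed $\Lambda$ carries its own hyperbolic splitting $T_\Lambda M=E^s_\Lambda\oplus E^u_\Lambda$; since every $v\in E^u_x$ with $x\in\Lambda$ satisfies $\|Tg^{-n}v\|\to 0$, it must lie in $E^u_\Lambda$, so $E^u|_\Lambda\subset E^u_\Lambda$. Consequently the strong unstable leaf $W^u_g(x)$ is locally contained in the unstable manifold of $x$ relative to the hyperbolic set $\Lambda$, which in turn is contained in $\Lambda$; saturating under the iterates of $g$ gives $W^u_g(x)\subset\Lambda$ for all $x\in\Lambda$. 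Therefore $S_g\subset\Lambda$, and in particular $\Lambda$ contains the nonempty set $S_g$.

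Finally I would invoke the standard fact that two distinct transitive hyperbolic attractors are disjoint (each is the closure of the unstable manifold of any of its points, so a common point would force them to coincide): if $g$ had two transitive hyperbolic attractors $\Lambda_1,\Lambda_2$, the previous step would give $\emptyset\neq S_g\subset\Lambda_1\cap\Lambda_2$, whence $\Lambda_1=\Lambda_2$. Thus every $g\in\cU$ has at most one transitive hyperbolic attractor, which is the assertion.

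Regarding difficulty: there is no genuinely hard step, everything reducing to Theorem~\ref{mintrans}(a) together with soft properties of hyperbolic sets. The single point I would take care to spell out is the $u$-saturation of a hyperbolic attractor of a partially hyperbolic diffeomorphism --- the short argument above, based on $\|Tg^{-1}|_{E^u}\|<1$ and the uniqueness of the hyperbolic splitting on $\Lambda$ --- together with, if desired, a precise reference for the disjointness of distinct transitive hyperbolic attractors. It may also be worth remarking that, by the $s\leftrightarrow u$ symmetry of the whole discussion, a stably $sus$-accessible $f$ has, on a $C^1$-neighborhood, at most one transitive hyperbolic repeller.
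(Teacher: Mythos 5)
Your proposal is correct and follows essentially the same route as the paper: the paper deduces this theorem from the proof of the corollary on Anosov flows, namely that every $g$ in the neighborhood is $usu$-accessible, hence $\cW^u(g)$ has a unique minimal set by Theorem~\ref{mintrans}(a), every transitive hyperbolic attractor is compact and $u$-saturated and so contains that minimal set, and distinct transitive hyperbolic attractors are disjoint. Your write-up merely supplies the routine details (the $u$-saturation of a hyperbolic attractor and the disjointness of attractors) that the paper leaves implicit.
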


Observe that the uniqueness of the minimal set of $\cW^u(f)$ implies that the transitive hyperbolic attractor, if it exists, is in fact topologically mixing. 

In case the diffeomorphisms are $C^{1+\alpha}$ the previous results are consequence of Theorem \ref{ugibbs}.

\section{Skew products}\label{skewproducts}

In this section we consider skew products (also known as isometric extensions) for which the base is a volume-preserving Anosov diffeomorphism and the fibers are circles. 
That means that we have $F_\vfi:M\times \mathbb S^1\to M\times \mathbb S^1$ such that $F_\vfi(x, \theta)= (f(x), R_{\vfi(x)}\theta)$ where $f:M\to M$ is a $C^r$ volume-preserving Anosov diffeomorphisms, $R_{\alpha}$ is the rotation of angle $\alpha$ and $\vfi:M\to\mathbb S^1$ is a $C^r$ map. $r\geq2$. These diffeomorphisms are partially hyperbolic and their ergodic theory  is well known. Burns and Wilkinson \cite{bw} proved the following (in fact they prove a much stronger result)

\begin{theorem}[\cite{bw}]\label{access} The set of accessible isometric $\mathbb S^1$-extensions of a volume-preserving Anosov diffeomorphism is open and dense in the $C^r$-topology.
\end{theorem}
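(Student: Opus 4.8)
The plan is to translate accessibility of $F_\vfi$ into a property of the base $f$ together with a group-valued holonomy attached to $\vfi$, and then to read off openness and density at the level of that invariant. Throughout I keep $f$ fixed and topologize the extensions by $\vfi\in C^r(M,\SS^1)$; recall that a volume-preserving Anosov diffeomorphism is transitive, so any two points of $M$ are joined by an $su$-path of $f$, and that the strong foliations of $F_\vfi$ project onto those of $f$ (each strong leaf of $F_\vfi$ is a graph over a strong leaf of $f$).

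\textbf{Step 1 (holonomy reduction).} First I would compute the fibrewise holonomy: if $y\in W^s_f(x)$, the $\cW^s(F_\vfi)$-holonomy from the fibre over $x$ to the fibre over $y$ is the rotation of angle $\Delta^s_\vfi(x,y)=\sum_{k\ge 0}\big(\vfi(f^kx)-\vfi(f^ky)\big)$, the series converging because $\vfi$ is Lipschitz and $f$ contracts $W^s_f$ uniformly, and similarly along unstable legs with the backward orbit. Thus each $su$-loop $\gamma=(x_0,\dots,x_{2m}=x_0)$ of $f$ carries a holonomy $\rho_\vfi(\gamma)\in\SS^1$, obtained by adding the $\Delta^{s/u}_\vfi$ of its legs; explicitly $\rho_\vfi(\gamma)=\sum_i c_i\,\vfi(z_i)$ is an absolutely convergent $\Z$-linear combination, with $\sum_i c_i=0$, of the values of $\vfi$ on the $f$-orbits of the corners, and $\vfi\mapsto\rho_\vfi(\gamma)$ is a continuous homomorphism $C^r(M,\SS^1)\to\SS^1$. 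Then $\Gamma_\vfi:=\{\rho_\vfi(\gamma):\gamma\ \text{an}\ su\text{-loop}\}$ is a subgroup of $\SS^1$, and using transitivity of $f$ one checks that the accessibility class of any point of $M\times\SS^1$ meets every fibre in exactly one coset of $\Gamma_\vfi$; hence $F_\vfi$ is accessible if and only if $\Gamma_\vfi=\SS^1$. (This is Brin's criterion for compact group extensions.)

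\textbf{Step 2 (a topological reformulation).} Next I would observe that the set $\cL$ of all $su$-loops of $f$ is a countable disjoint union $\bigsqcup_j\cL_j$ of connected, second-countable sets — the leaves of $W^s_f$ and $W^u_f$ are connected, the legs range over them, and the closing conditions cut out locally closed pieces with at most countably many components — and that $\gamma\mapsto\rho_\vfi(\gamma)$ is continuous on each $\cL_j$. If $\rho_\vfi$ were constant on every $\cL_j$, then $\Gamma_\vfi$ would be countably generated, hence countable, so $\Gamma_\vfi\ne\SS^1$; and if $\rho_\vfi$ is non-constant on some $\cL_j$, then $\rho_\vfi(\cL_j)$ is a non-degenerate connected subset of $\SS^1$, which generates $\SS^1$. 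So
\[
\{\vfi:\ F_\vfi\ \text{accessible}\}\ =\ \bigcup_j\big\{\vfi:\ \rho_\vfi|_{\cL_j}\ \text{is non-constant}\big\}\ =\ \bigcup_j\big(C^r(M,\SS^1)\setminus V_j^{\perp}\big),
\]
where $V_j^{\perp}:=\bigcap_{\gamma,\gamma'\in\cL_j}\ker(\rho_\gamma-\rho_{\gamma'})$.

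\textbf{Step 3 (openness, density, and the main obstacle).} Each $V_j^{\perp}$ is an intersection of kernels of continuous homomorphisms, so it is a closed subgroup and its complement is open; hence the accessible set, being a union of such complements, is open. For density it suffices to produce one index $j_0$ with $V_{j_0}^{\perp}$ a proper subgroup with dense complement: if $\gamma,\gamma'\in\cL_{j_0}$ have distinct associated combinations of point masses $\nu_\gamma\ne\nu_{\gamma'}$, then $\rho_\gamma-\rho_{\gamma'}$ is already nonzero on the null-homotopic maps $C^r_0(M,\SS^1)\cong C^r(M,\R)/\Z$ (there it is evaluation against a nonzero signed measure), so $V_{j_0}^{\perp}$ does not contain the identity component, hence has empty interior, hence is nowhere dense, and $C^r(M,\SS^1)\setminus V_{j_0}^{\perp}$ is open, dense, and contained in the accessible set. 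To exhibit such a $j_0$ I would fix a hyperbolic periodic point $p$ of $f$ and, via the local product structure, build a continuous one-parameter family of $su$-loops $\gamma_x$ based at $p$ with a ``moving tip'' $x$ in a small ball; for two generic tips $x_0\ne x_1$, whose $f$-orbits are disjoint and each avoids the finitely many orbits entering the other loop, the mass $-\delta_{x_0}$ persists in $\nu_{\gamma_{x_0}}$ but not in $\nu_{\gamma_{x_1}}$, so $\nu_{\gamma_{x_0}}\ne\nu_{\gamma_{x_1}}$ and the two loops lie in a common component $\cL_{j_0}$. The dynamical heart is Step 1 (the holonomy computation, and using transitivity of the base to force the accessibility classes to be full $\Gamma_\vfi$-cosets) together with this construction of a connected family of $su$-loops on which the holonomy is not locally constant; I expect the bookkeeping in that construction — keeping the relevant orbits disjoint and ensuring the moving tip contributes a non-cancelling point mass — to be the chief technical nuisance, while everything else is soft. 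For openness alone one could instead invoke that accessibility is $C^1$-open among diffeomorphisms with one-dimensional center and restrict to skew products, but density still requires a perturbative argument of the above kind.
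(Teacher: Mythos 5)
First, a point of comparison: the paper does not prove this statement at all --- it is quoted from Burns--Wilkinson \cite{bw} (who prove a stronger result about stable accessibility/stable ergodicity), so there is no in-paper argument to measure you against. Your strategy --- Brin's holonomy criterion for circle extensions, linearity of the loop holonomy $\vfi\mapsto\rho_\vfi(\gamma)$ for a fixed $su$-loop of the base, and then producing a connected family of loops on which the holonomy becomes non-constant after a perturbation localized near a periodic orbit --- is indeed the standard route and is close in spirit to the actual \cite{bw} proof; the reduction in Step 1 and the observation that a subgroup of $\SS^1$ containing a nondegenerate connected set is all of $\SS^1$ are correct and are the right soft ingredients.

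However, two steps that you treat as routine are genuine gaps. (i) In Step 2 you assert that the space of $su$-loops has at most countably many connected components; this is unsupported, since the loop space is a level set of the endpoint map on the path space, and level sets of continuous maps can have uncountably many components --- and this countability is exactly what your displayed equality (hence your openness argument) rests on. Openness of accessibility is genuinely delicate here (a priori the holonomy group could be a proper dense subgroup), and the honest fix is the one you mention in passing: quote the $C^1$-openness of accessibility for one-dimensional center (\cite{di,hhu}), which is also the mechanism behind Theorem \ref{robustness} of this paper. (ii) For density, the non-vanishing of the functional $\rho_\cdot(\gamma_{x_0})-\rho_\cdot(\gamma_{x_1})$ is not a bookkeeping ``nuisance'' but the heart of the proof, and your sketch does not establish it. The expression $\sum_i c_i\vfi(z_i)$ is not an absolutely convergent combination of point masses (only the paired differences along legs converge), so ``the mass $-\delta_{x_0}$ persists'' has no direct meaning; to show the functional is nonzero you must test it against a bump at $x_0$ and control the infinitely many other orbit points, which can recur arbitrarily close to $x_0$ at an uncontrolled rate, using the exponential pairing decay and a careful choice of the tip and of the supports. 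This can be carried out (and is, in \cite{bw}, via perturbations supported near a point whose relevant orbit segments meet the support exactly once), but as written your Step 3 asserts rather than proves the key estimate.
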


We will show that if a skew product as above satisfies the accessibility property, both the strong stable and the strong unstable foliation are minimal. The arguments to show this fact are inspired in similar arguments of Plante \cite{pl}.

Observe that $F_\vfi$ commutes with $G_\alpha:M\times \mathbb S^1\to M\times \mathbb S^1$, $G_\alpha(x, \theta)=(x, R_\alpha \theta)$ $\forall \alpha \in \mathbb S^1$. This implies that $\cW^\sigma(F_\vfi)$ is $G_\alpha$-invariant for $\sigma=s,\,u$. 

\begin{theorem}\label{accmin} Let $F_\vfi$ be as above and suppose that it satisfies the accessibility property. Then, $\cW^\sigma(F_\vfi)$ is minimal, $\sigma=s,\,u$. 

\end{theorem}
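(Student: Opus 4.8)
\textbf{Proof proposal for Theorem \ref{accmin}.}

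The plan is to exploit the extra symmetry $G_\alpha$ together with the structure of the strong foliations described in the Preliminaries. First I would fix $\sigma=u$ (the stable case being symmetric, applying the argument to $F_\vfi^{-1}$). Recall that each leaf $W^{u}_{F_\vfi}(x,\theta)$ projects under $\pi$ onto the leaf $W^{u}_{f}(x)$ and is in fact the graph of a $C^r$ function from $W^{u}_f(x)$ to $\mathbb S^1$; moreover, since $f$ is Anosov, every leaf of $\cW^u(f)$ is dense in $M$. Consequently $\overline{W^{u}_{F_\vfi}(x,\theta)}$ projects onto all of $M$. The task is to upgrade this to density in $M\times\mathbb S^1$.

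The key step is to show that the closure $\Lambda:=\overline{W^{u}_{F_\vfi}(x,\theta)}$ is $G_\alpha$-invariant for \emph{every} $\alpha\in\mathbb S^1$, not merely for those $\alpha$ coming from the leaf itself. Here is where I would use accessibility. Since $\cW^s(F_\vfi)$ and $\cW^u(F_\vfi)$ are both $G_\alpha$-invariant for all $\alpha$, the accessibility classes of $F_\vfi$ are $G_\alpha$-invariant sets; but accessibility says there is a single accessibility class, namely all of $M\times\mathbb S^1$. That by itself does not immediately give invariance of $\Lambda$, so the real content is a holonomy argument: I would consider the set $H$ of $\alpha\in\mathbb S^1$ such that $G_\alpha(\Lambda)=\Lambda$. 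One checks $H$ is a closed subgroup of $\mathbb S^1$. If $H=\mathbb S^1$ we are done, since then $\Lambda\supset G_{\mathbb S^1}(W^u_{F_\vfi}(x,\theta))$ which projects onto $M$ and is fibre-saturated, hence equals $M\times\mathbb S^1$. If $H$ is finite, then $\Lambda$ meets each fibre $\{p\}\times\mathbb S^1$ (for $p$ in the full-measure, in fact residual, set where it meets the fibre at all) in a set that is at most a finite union of $H$-cosets; I would then derive a contradiction with accessibility. Indeed, pushing $\Lambda$ along a stable holonomy $h_s$ moves it inside another $u$-leaf closure, and iterating stable/unstable holonomies one can change the "$\theta$-offset" of $\Lambda$ continuously along suitable loops in $M$ — this is precisely the mechanism Plante \cite{pl} uses, where the failure of minimality forces joint integrability of $E^s\oplus E^u$, i.e.\ the existence of an $F_\vfi$-invariant section, contradicting accessibility (accessibility is incompatible with joint integrability of the strong bundles).

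Concretely, the steps in order are: (1) reduce to $\sigma=u$; (2) record that $\pi(\Lambda)=M$ using density of $f$'s unstable leaves and the graph structure of the leaves of $\cW^u(F_\vfi)$; (3) define $H\le\mathbb S^1$ as above, show it is a closed subgroup, hence either all of $\mathbb S^1$ or finite; (4) in the finite case, show that $\Lambda$ would define (by taking the set-valued fibre $\Lambda_p$ over $p$, which is a single $H$-coset modulo the argument, continuous in $p$) a holonomy-invariant reduction of the $\mathbb S^1$-bundle, equivalently that $E^s\oplus E^u$ is jointly integrable in the relevant sense, contradicting the accessibility hypothesis via \cite{pl} (or a direct transitivity-of-holonomy argument); (5) conclude $H=\mathbb S^1$, whence $\Lambda$ is fibre-saturated and projects onto $M$, so $\Lambda=M\times\mathbb S^1$.

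The main obstacle I anticipate is step (4): making rigorous the passage from "$\Lambda$ is not fibre-saturated" to "there is an $F_\vfi$-invariant object obstructing accessibility." The delicate point is that $\Lambda_p$ need not vary continuously a priori, and one must use the $G_\alpha$-invariance of both strong foliations plus a Baire/semicontinuity argument to show the fibres are equidistributed cosets of the \emph{same} finite subgroup $H$, and that the resulting offset function is well-defined and locally constant along $s$- and $u$-holonomies — at which point accessibility forces it to be globally constant, contradicting $H\neq\mathbb S^1$. Keeping the bookkeeping of the $\theta$-coordinate under successive holonomies (which is exactly the cocycle $\vfi$ integrated along $su$-paths) is the technical heart, and this is where the inspiration from Plante's argument cited in the text will be carried out in detail.
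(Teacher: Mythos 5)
Your overall strategy --- exploit the $G_\alpha$-symmetry of the strong foliations and show, Plante-style, that failure of minimality produces a joint integral object for $E^s\oplus E^u$, contradicting accessibility --- is exactly the spirit of the paper's proof (which in fact proves the stronger statement that non-minimality of $\cW^\sigma(F_\vfi)$ forces $E^s\oplus E^u$ to be integrable). But there is a genuine gap at the step you yourself flag. In the finite-$H$ case you assert that $\Lambda_p=\Lambda\cap(\{p\}\times\mathbb S^1)$ is a finite union of $H$-cosets, and nothing in your setup forces this: a priori $\Lambda_p$ can be infinite (a Cantor set, say), and the dichotomy ``$H$ finite or $H=\mathbb S^1$'' gives no control whatsoever on the fibres, since $\Lambda$ being invariant under a possibly trivial subgroup says nothing about $\Lambda_p$. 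The underlying difficulty is that for the full leaf closure $\Lambda$, two points of the same fibre at distance less than $\e$ do \emph{not} yield $G_\alpha(\Lambda)=\Lambda$ for the small offset $\alpha$; one only gets $G_\alpha\bigl(\overline{W^u_{F_\vfi}(z)}\bigr)\subset\Lambda$ for some $z\in\Lambda$, which is strictly weaker, so your subgroup $H$ may well be trivial even in bad situations.

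The paper's fix is to replace $\Lambda$ by a \emph{minimal} closed $u$-saturated subset $K\subset\Lambda$. Minimality is what converts $G_\alpha(K)\cap K\neq\emptyset$ (which holds with $|\alpha|<\e$ as soon as two points of $K$ in one fibre are $\e$-close) into $G_\alpha(K)=K$, whence $K$ is $\e$-dense in that fibre for every $\e$, contains the whole fibre $S$, and then, since $W^u_{F_\vfi}(S)$ is dense, $K=M\times\mathbb S^1$, a contradiction; this gives finiteness of every fibre of $K$. One then shows the counting function $\Psi(y)=\#\bigl(K\cap\{y\}\times\mathbb S^1\bigr)$ is upper semicontinuous and constant along unstable leaves of $f$, hence constant by minimality of $\cW^u(f)$, so $\pi|_K$ is a finite covering and $\cK=\bigcup_{\alpha}G_\alpha(K)$ is a $C^0$ foliation with compact leaves. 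Even after repairing the finiteness issue you would still need the second load-bearing step, which your plan only gestures at: because $F_\vfi$ is an \emph{isometric} extension, the fibrewise distance between two leaves of $\cK$ is $F_\vfi$-invariant, so it cannot decay along forward orbits; hence each stable manifold meets only one leaf of $\cK$, the leaves are $s$-saturated, and $\cK$ is tangent to $E^s\oplus E^u$, contradicting accessibility. In short, the skeleton is right, but the two decisive steps (finiteness of fibres via passage to a minimal subset, and $s$-saturation via the isometry of the extension) are precisely the ones missing from your proposal.
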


In fact, the previous theorem is a consequence of the following stronger fact.

\begin{proposition} Let $F_\vfi$ be as above and suppose that $\cW^\sigma(F_\vfi)$ is not minimal. Then, $E^s\oplus E^u$ is integrable.

\end{proposition}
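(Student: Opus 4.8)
The plan is to exploit the $G_\alpha$-invariance of $\cW^\sigma(F_\vfi)$ together with the product structure of the skew product, following Plante's strategy for homogeneous-type examples. Fix $\sigma$ (say $\sigma=s$, the case $\sigma=u$ being symmetric) and suppose $\cW^{s}(F_\vfi)$ is not minimal. First I would pass to a nonempty closed $s$-saturated proper subset $\Lambda\subsetneq M\times\SS^1$; shrinking to a minimal one if convenient, I may assume $\Lambda$ is minimal for $\cW^{s}(F_\vfi)$. Because $F_\vfi$ commutes with every $G_\alpha$ and hence $\cW^{s}(F_\vfi)$ is $G_\alpha$-invariant, the set $G_\alpha(\Lambda)$ is again a closed $s$-saturated minimal set for every $\alpha\in\SS^1$. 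Two minimal sets of the same foliation are either equal or disjoint, so the family $\{G_\alpha(\Lambda):\alpha\in\SS^1\}$ consists of pairwise disjoint copies of $\Lambda$, and their union is a closed, $G$-invariant, $s$-saturated set.

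The key step is to show that $\Lambda$ meets each circle fiber $\{x\}\times\SS^1$ in a single point (equivalently, that the stabilizer $\{\alpha:G_\alpha(\Lambda)=\Lambda\}$ is trivial). Here I would use that the center foliation of $F_\vfi$ is exactly the circle fibration and that $E^{cs}=E^c\oplus E^s$ integrates to $\cW^{cs}(F_\vfi)$, whose leaves are $\pi^{-1}$ of the stable leaves of the base Anosov $f$ — in particular each $\cW^{cs}$-leaf is $W^{s}_f(p)\times\SS^1$ and is dense. The intersection $\Lambda\cap(W^s_f(p)\times\SS^1)$ is then a closed $s$-saturated subset of this cylinder which is invariant under the fiber rotations only through the subgroup $\mathrm{Stab}(\Lambda)$; minimality of $\Lambda$ forces the fiberwise ``trace'' of $\Lambda$ to be a single $G_{\mathrm{Stab}}$-orbit. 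If $\mathrm{Stab}(\Lambda)$ were a nontrivial proper closed subgroup, i.e.\ a finite cyclic group, then passing to the corresponding finite cover (or equivalently replacing $\vfi$ by $k\vfi$) reduces to the trivial-stabilizer case; and $\mathrm{Stab}(\Lambda)=\SS^1$ is impossible since that would make $\Lambda$ a $G$-invariant $cs$-saturated set, forcing $\Lambda=M\times\SS^1$, contradicting properness. So after this reduction $\Lambda$ is a continuous section: there is $\psi\colon M\to\SS^1$ with $\Lambda=\graph(\psi)=\{(x,\psi(x))\}$.

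Once $\Lambda$ is a graph, $s$-saturation of $\Lambda$ says precisely that over each stable leaf $W^s_f(p)$ of the base, $\psi$ restricted to that leaf describes the fiber coordinate of a union of strong stable leaves of $F_\vfi$ — that is, $\graph(\psi|_{W^s_f(p)})$ is a single leaf of $\cW^{s}(F_\vfi)$, hence $\psi$ is as smooth along stable leaves as the foliation ($C^r$, by the description of the strong leaves as graphs of $C^r$ functions over base leaves). Now I run the same argument with the \emph{unstable} foliation: $\cW^{s}(F_\vfi)$ not minimal does not a priori give non-minimality of $\cW^u$, so instead I would observe that $\Lambda=\graph(\psi)$ is itself $F_\vfi$-invariant (the unique minimal set of an $F_\vfi$-invariant foliation is $F_\vfi$-invariant, exactly as in the Remark after Theorem \ref{mintrans}), and that $F_\vfi$-invariance of the graph means $\psi\circ f = \vfi + \psi$ on $M$ (mod $2\pi$), i.e.\ $\vfi$ is a coboundary. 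A coboundary cocycle gives a conjugacy $H(x,\theta)=(x,\theta-\psi(x))$ between $F_\vfi$ and the trivial extension $F_0=f\times\mathrm{id}$, and for $F_0$ the distribution $E^s\oplus E^u$ is obviously integrable (its leaves are $W^s_f(p)\times W^u_f(p)\times\{\theta\}$), hence it is integrable for $F_\vfi$ as well. The regularity of $\psi$ needed to make $H$ a genuine homeomorphism conjugacy is supplied by the leafwise $C^r$ regularity together with continuity of $\Lambda$; integrability of $E^s\oplus E^u$ only needs $H$ to be a homeomorphism carrying leaves to leaves.

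The main obstacle I anticipate is the second step: controlling how $\Lambda$ sits inside the circle fibers. A priori a minimal set of $\cW^s(F_\vfi)$ could intersect a fiber in a Cantor set, and ruling this out is where the rigidity really happens — one must combine the transitivity/density of the $cs$-leaves, the $G_\alpha$-equivariance, and minimality to upgrade ``closed $s$-saturated'' to ``graph''. The cleanest route is probably to argue that $\Lambda\cap(\{x\}\times\SS^1)$ is a closed set whose translates under $\mathrm{Stab}(\Lambda)^{c}$ are disjoint, is independent of $x$ up to the obvious identifications along $cs$-leaves, and is therefore a single coset of a closed subgroup of $\SS^1$; Plante's argument in \cite{pl} gives the template for this, and I would adapt it verbatim to the cocycle setting.
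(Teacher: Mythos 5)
Your plan hinges on the claim that minimality of $\Lambda$ forces its fiberwise trace $\Lambda\cap(\{x\}\times\SS^1)$ to be a single orbit of the stabilizer $\{\alpha: G_\alpha(\Lambda)=\Lambda\}$, hence (after your stabilizer reduction) a single point, so that $\Lambda$ is a graph. You correctly flag this as the crux, but it is left entirely unproved, and it is not how the rigidity actually works: in general one can only expect the trace to be a \emph{finite} set of some cardinality $h$, with no coset structure and no reason to reduce $h$ to $1$. The paper's argument goes exactly through this weaker statement: if some trace were infinite, two of its points at fiber distance $\alpha<\e$ give $G_\alpha(K)\cap K\neq\emptyset$, and since $G_\alpha(K)$ is again minimal this forces $G_\alpha(K)=K$, hence a whole circle fiber lies in $K$, hence $K=M\times\SS^1$; then $\Psi(y)=\#\bigl(K\cap(\{y\}\times\SS^1)\bigr)$ is shown to be upper semicontinuous and constant along base unstable leaves, hence constant $=h$, so $(K,\pi)$ is an $h$-fold cover and $\cK=\bigcup_{\alpha}G_\alpha(K)$ is a $C^0$ foliation with compact leaves. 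The isometric structure (the fiberwise distance between leaves of $\cK$ is preserved by $F_\vfi$) then shows no stable manifold can meet two distinct leaves, so the leaves of $\cK$ are saturated by the other strong foliation and $\cK$ is tangent to $E^s\oplus E^u$. No graph, coset, or cohomological statement is ever needed.

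Moreover, two of your later steps are false, not merely unjustified. A minimal set of $\cW^s(F_\vfi)$ need not be $F_\vfi$-invariant: the Remark you invoke concerns the \emph{unique} minimal set produced by Theorem \ref{mintrans} under $usu$-accessibility, which is precisely what is unavailable here ($F_\vfi$ maps a minimal set to a possibly different minimal set). Concretely, take $\vfi\equiv\alpha$ a constant irrational rotation, so $F_\vfi=f\times R_\alpha$: the minimal sets of $\cW^s(F_\vfi)$ are the sets $M\times\{\theta\}$, which are permuted rather than fixed by $F_\vfi$, and $\vfi$ is not a coboundary (its average is $\alpha\neq 0$), yet $E^s\oplus E^u$ is integrable. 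So your endgame (invariant graph $\Rightarrow$ $\psi\circ f=\vfi+\psi$ $\Rightarrow$ conjugacy with $f\times\mathrm{id}$) asserts a strictly stronger conclusion than the proposition and fails in this example. The argument needs to be rebuilt around producing an equivariant compact-leaved $su$-saturated foliation, not a section/coboundary.
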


\begin{proof} Suppose that there is $x\in M\times \mathbb S^1$ such that  $W^u_{F_\vfi}(x)$ is not dense. Since $\overline{W^u_{F_\vfi}(x)}$ is $u$-saturated we can take $K\subset \overline{W^u_{F_\vfi}(x)}$  a minimal subset. Call $\pi:M\times \mathbb S^1\to M$ to the projection onto $M$, that is $\pi(y,\theta)=y$. It is not difficult to see that $\pi(W_{F_\vfi}^u(z))=W^u_f(\pi(z))$. In particular, since $f$ is transitive and $K$ is compact and $u$-saturated we have that $\pi(K)=M$. The strategy is to prove that $K$ is also $s$-saturated. This will immediately imply that $F_\vfi$ is not accessible (moreover, the invariance of the strong foliations under $G_\alpha$ $\forall \alpha \in \mathbb S^1$ will imply that $E^s\oplus E^u$ is integrable)

Our first claim is that $\# (K\cap \{y\}\times\mathbb S^1)$ is finite for every $y\in M$. Suppose that it is false, then there is $y_0\in M$ such that for every $\e>0$, $S=\{y_0\}\times \mathbb S^1$ has two points $(y_0, \theta_1)$ and $(y_0, \theta_2)$ at distance less than $\e$. This means that  $\alpha=\theta_2-\theta_1<\e$. Observe that  $G_\alpha(y_0, \theta_1)=(y_0, \theta_2)$ and then, $G_\alpha(K)\cap K\neq\emptyset$. Since $K$ is minimal and $\cW^u(F_\vfi)$ is $G_\alpha$-invariant we have that $G_\alpha(K)=K$. Then, $G_{n\alpha}(y_0, \theta_1)\in K$ $\forall n \in \Z$. In particular, $K$ is $\e$-dense in $S$. Since $\e$ is arbitrary we obtain that $S\subset K$. This implies that $W^u_{F_\vfi}(S)\subset K$ and it is not difficult to see that $W^u_{F_\vfi}(S)$ is dense, then $K=M\times \mathbb S^1$ contradicting that $W^u_{F_\vfi}(x)$ is not dense.

The previous considerations allow us to define $\Psi: M\to \NN$ as $\Psi(y)= \#(K\cap \{y\}\times\mathbb S^1)$. Our second claim is that this function is upper semicontinuous. Suppose it is false. Then, there is a sequence $(y_n)_n\subset M$ such that $y_n\to y$ and $\Psi(y_n)\to \eta >\Psi(y)$. Since $K$ is compact, we also have that  $\lim (K\cap \{y_n\}\times\mathbb S^1)\subset K\cap \{y\}\times\mathbb S^1$. In particular, we have that for any $\e>0$ and $n$ large enough there two points in  $\{y_n\}\times\mathbb S^1$ at distance less than $\e$. An argument similar to the one used in the proof of our first claim gives that $\{y\}\times\mathbb S^1\subset K$ and then, $K=M\times \mathbb S^1$, a contradiction.

Now we want to proof that $\Psi$ is constant. On the one hand, observe that if $y'\in W^u_f(y)$, $\Psi(y')=\Psi(y)$. On the other hand, since $\Psi$ is semicontinuous it has continuity points and  it is locally constant at this kind of points. Then, the minimality of the unstable foliation of $f$ implies that $\Psi$ is equal to a constant $h$.

The previous considerations imply that $(K, \pi)$ is an $h$-fold covering of $M$ and it is not difficult to see that $\cK=\cup_{\alpha\in \mathbb S^1} G_\alpha(K)$ is a $C^0$ foliation of  $M\times \mathbb S^1$ with compact leaves homeomorphism to $K$. 
Since $F_\vfi$ is an isometric extension $\dist_c (K_1, K_2)=\min \{dist(k_1, k_2); \, k_1\in K_1, \, k_2\in K_2, \, \pi(k_1)=\pi(k_2)\}=\dist_c(F_\vfi(K_1), F_\vfi(K_2))$ where $K_1,K_2\in \cK$. In particular, $\dist(F^n_\vfi(K_1), F^n_\vfi(K_2)$ does not go to zero. Then, any stable manifold intersects only one leaf of $\cK$.  That means that the leaves of $\cK$ are $s$-saturated and then, $\cK$ is a foliation tangent to $E^s\oplus E^u$.
\end{proof}

We get the following corollary of Theorems \ref{access} and \ref{robustness} for an open and dense set of skew products over volume-preserving Anosov diffeomorphisms.

\begin{corollary}
Let $F$ be an accessible isometric circle extension of a volume-preserving Anosov diffeomorphism. Then, there is a $C^1$ neighborhood $\cU$ of $F$ such that:
\begin{itemize}
\item $\cW^\sigma(F)$ has a unique minimal set, $\sigma=s,u$.
\item $\cW^\sigma(F)$ is transitive, $\sigma=s,u$.
\end{itemize}

\end{corollary}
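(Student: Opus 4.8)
The plan is to assemble Theorem~\ref{accmin}, Theorem~\ref{robustness} and Theorem~\ref{mintrans} more or less mechanically. First I would invoke Theorem~\ref{accmin}: since $F$ is an accessible isometric circle extension of a volume-preserving Anosov diffeomorphism, both strong foliations $\cW^u(F)$ and $\cW^s(F)$ are minimal. Note that $F$ is partially hyperbolic with one-dimensional center (the circle fibers), so the hypotheses of Theorem~\ref{robustness} are met with $\cW^u(F)$ minimal, and also with $\cW^s(F)$ minimal by the analogous statement. Hence there exist open $C^1$-neighborhoods $\cU_u$ and $\cU_s$ of $F$ such that every $g\in\cU_u$ is $usu$-accessible and every $g\in\cU_s$ is $sus$-accessible.

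Next I would set $\cU=\cU_u\cap\cU_s$, an open $C^1$-neighborhood of $F$, and fix $g\in\cU$. Applying Theorem~\ref{mintrans} to the $usu$-accessible diffeomorphism $g$ gives that $\cW^u(g)$ has a unique minimal set and that $\cW^s(g)$ is transitive. Applying the version of Theorem~\ref{mintrans} with the roles of $s$ and $u$ interchanged to the $sus$-accessible diffeomorphism $g$ gives that $\cW^s(g)$ has a unique minimal set and that $\cW^u(g)$ is transitive. Combining the two, $\cW^\sigma(g)$ has a unique minimal set and is transitive for $\sigma=s,u$, which is precisely the assertion of the corollary.

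I do not expect a serious obstacle here; the only points that require care are (i) verifying that the standing hypotheses of Theorem~\ref{robustness} — accessibility, one-dimensional center, and minimality of the relevant strong foliation — genuinely hold in this setting, with the minimality being supplied by Theorem~\ref{accmin}; and (ii) being explicit that the $s\leftrightarrow u$ symmetric form of Theorem~\ref{mintrans} (and, similarly, of Theorem~\ref{robustness}) follows by passing to $g^{-1}$, which interchanges $\cW^s$ and $\cW^u$ and converts $sus$-accessibility into $usu$-accessibility. Everything else reduces to intersecting the two neighborhoods.
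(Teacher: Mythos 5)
Your proposal is correct and follows essentially the same route the paper intends: Theorem~\ref{accmin} gives minimality of both strong foliations, Theorem~\ref{robustness} (and its $s$-version) gives robust $usu$- and $sus$-accessibility on a common $C^1$-neighborhood, and Theorem~\ref{mintrans} (and its $s\leftrightarrow u$ symmetric form) yields the stated conclusions for every $g$ in that neighborhood. The points of care you flag — one-dimensional center, accessibility, and obtaining the symmetric statements via $g^{-1}$ — are exactly the right ones and present no difficulty.
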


Moreover, we can apply Theorem \ref{ugibbs} and Theorem \ref{uniqueattract} to these diffeomorphisms and obtain the corresponding conclusions.

\begin{remark}
Hammerlindl and Potrie \cite{hp} have shown that partially hyperbolic diffeomorphisms on 3-nilmanifolds (different than the torus) always are $sus$- and $usu$-accessible. In particular we can also apply Theorem \ref{ugibbs} and Theorem \ref{uniqueattract} to them. On the other hand, Shi \cite{sh} has announced that, on 3-nilmanifolds,  there are partially hyperbolic diffeomorphism satisfying Axiom A. Of course, they have only one attractor and one repeller.

\end{remark}

\section{Anosov diffeomorphisms}\label{anosovdiffeomorphisms}

In this section we will suppose that $A$ is a hyperbolic automorphism of $\T^3$ with three real eigenvalues $|\lambda_1|< 1<|\la_2|<|\la_3|$. Along this section we will assume  that $f$ is an Anosov diffeomorphism isotopic to $A$ and such that it admits a partially hyperbolic splitting $E^s\oplus E^c\oplus E^u$. Observe that $E^c\oplus E^u$ corresponds to the unstable bundle of the hyperbolic splitting of $f$. We obtain the following result that answers Conjecture \ref{strong_trans} by Gogolev, Maimon and Kolmorgorov. 

\begin{theorem}
Let $f$ be as above. Then, the strong unstable foliation of $f$ is transitive. 

\end{theorem}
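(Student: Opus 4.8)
The plan is to reduce the transitivity of $\cW^u(f)$ to an application of Theorem \ref{mintrans}, part (2). To do so, I would first establish that $f$ is $usu$-accessible, taking advantage of the very rigid geometry forced on $\cW^u(f)$ by the fact that $E^c\oplus E^u$ is the (full) unstable bundle of the Anosov splitting of $f$ and that $f$ is isotopic to the hyperbolic automorphism $A$ of $\T^3$. The key structural input is that for Anosov diffeomorphisms of $\T^3$ isotopic to such an $A$, the weak and strong unstable foliations $\cW^{cu}(f)$ and $\cW^u(f)$ lift to $\R^3$ to foliations quasi-isometric to the corresponding linear foliations of $A$, and in particular $\cW^u(f)$ is a minimal foliation of $\T^3$ (the strong unstable foliation of $A$ is an irrational linear foliation of each weak-unstable leaf; this minimality survives the perturbation). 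The center bundle here is one-dimensional, so I would then invoke Theorem \ref{robustness}: accessibility of $f$ together with minimality of $\cW^u(f)$ gives $usu$-accessibility of $f$ itself. Finally, Theorem \ref{mintrans}(2) applied with the roles of $s$ and $u$ exchanged—or rather Theorem \ref{mintrans}(1)–(2) read for the $usu$-accessible $f$, noting that the conclusion about transitivity of $\cW^s$ has an evident $sus$/$usu$-symmetric counterpart for $\cW^u$—yields that $\cW^u(f)$ is transitive.

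In more detail, the steps in order would be: (i) recall/verify that $f$ is partially hyperbolic with $\dim E^c = 1$ and that $\cW^u(f)$ is well-defined (it is a strong foliation, hence uniquely integrable); (ii) show $f$ is accessible—here one uses that $f$ is an Anosov diffeomorphism of $\T^3$, so $E^s\oplus E^u$ being integrable would, by Plante's theorem cited in Section \ref{anosovflows}, force $f$ to be (topologically equivalent to, and in dimension three, literally) a suspension, which is incompatible with $f$ being a diffeomorphism of $\T^3$ isotopic to a hyperbolic automorphism with the given eigenvalue pattern; hence $E^s\oplus E^u$ is not integrable and $f$ is accessible; (iii) show $\cW^u(f)$ is minimal, using the global product structure / quasi-isometry of the lifted foliations on $\R^3$ and the irrational linear dynamics of the strong unstable foliation of $A$ inside its weak unstable leaves; (iv) apply Theorem \ref{robustness} to conclude $f$ (and a whole $C^1$-neighborhood) is $usu$-accessible; (v) apply the transitivity half of Theorem \ref{mintrans} in the $usu$ setting to conclude $\cW^u(f)$ has a dense leaf.

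I expect the main obstacle to be step (iii), the minimality of $\cW^u(f)$. Unlike the Anosov-flow or skew-product cases treated earlier, here $\cW^u$ sits \emph{inside} the nonlinear weak-unstable foliation $\cW^{cu}(f)$ rather than being transverse to a product structure, so one cannot simply quote a commuting-symmetry argument as in Theorem \ref{accmin}. The natural route is via the work on the topology of these foliations for $\T^3$-Anosov diffeomorphisms isotopic to $A$ (leaf conjugacy / quasi-isometry results), from which one extracts that each $\cW^{cu}$-leaf is dense and that $\cW^u$ restricted to a $\cW^{cu}$-leaf is minimal because its leaves spiral densely—mirroring the irrational slope of the linear model. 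A secondary, more routine point is making sure accessibility in step (ii) really does follow from non-integrability of $E^s\oplus E^u$ in this Anosov (rather than Anosov-flow) context; if a direct citation is unavailable one argues it by hand, again using the rigidity of the lifted foliations. Once minimality is in hand, steps (iv) and (v) are immediate from the already-proved Theorems \ref{robustness} and \ref{mintrans}.
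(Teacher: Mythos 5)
There are genuine gaps, and they sit exactly where you predicted the difficulty would be, plus one place where you did not. First, your step (ii) is false: an Anosov diffeomorphism of $\T^3$ with partially hyperbolic splitting need not be accessible --- the automorphism $A$ itself has $E^s\oplus E^u$ integrable (it is a sum of eigenspaces, integrating to a linear foliation), and Plante's theorem is a statement about Anosov \emph{flows}, so the ``it would have to be a suspension'' contradiction does not apply to a diffeomorphism. The paper does not prove accessibility; it splits into two cases using the result of Ren--Gan--Zhang \cite{rgz} that in this setting non-accessibility, integrability of $E^s\oplus E^u$, and the property that the conjugacy $h$ with $A$ carries strong unstable leaves of $f$ to strong unstable leaves of $A$ are all equivalent. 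In the non-accessible case transitivity of $\cW^u(f)$ is immediate because $h$ transports the minimal linear strong unstable foliation of $A$. Second, your step (iii), minimality of $\cW^u(f)$ in general, is precisely what is \emph{not} available: the quasi-isometry/leaf-conjugacy picture does not control the strong unstable foliation inside the weak unstable leaves in the accessible case (indeed, by \cite{rgz}, $h$ matches strong unstables only when $f$ is \emph{not} accessible), and if minimality of $\cW^u(f)$ did hold in general the theorem would be trivial (every leaf dense) and your steps (iv)--(v) would be superfluous. This is the reason the conjecture asks only for transitivity.

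There is also a directional error in how you invoke the paper's machinery: minimality of $\cW^u$ plus accessibility yields, via Theorem \ref{robustness}, $usu$-accessibility, and Theorem \ref{mintrans} then gives transitivity of $\cW^s$, not of $\cW^u$. To conclude transitivity of $\cW^u$ you need the mirror-image chain, i.e.\ $sus$-accessibility, which requires minimality of $\cW^s$. That is the foliation whose minimality comes for free here: $E^s$ is the full stable bundle of the Anosov splitting, so $h$ sends $\cW^s(f)$ to the stable foliation of $A$, a minimal (irrational linear) foliation. The paper's argument in the accessible case is exactly this: $\cW^s(f)$ minimal $+$ accessibility $\Rightarrow$ (Theorem \ref{robustness} with the roles of $s$ and $u$ reversed) $sus$-accessibility $\Rightarrow$ (Theorem \ref{mintrans}) $\cW^u(f)$ transitive. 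So the correct fix to your outline is to drop steps (ii)--(iii), replace them with the minimality of the strong \emph{stable} foliation via the conjugacy and the case split from \cite{rgz}, and then run your steps (iv)--(v) on the stable side.
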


\begin{proof} Call $h$ to the conjugacy between $f$ and $A$. Observe that since the strong stable manifolds of $f$ are its stable manifolds we have that $h$ sends strong stable manifolds of $f$ into strong stable manifolds of $A$. Thus, the strong stable foliations of $f$ is minimal.

Ren, Gan and Zhang \cite{rgz} have proved that, in our setting, the following statements are equivalent:

\begin{itemize}

\item $f$ is not accessible.

\item $E^s\oplus E^u$ is integrable.

\item $h$ sends strong unstable manifolds of $f$ into strong unstable manifolds of $A$.

\end{itemize}

Then, on the one hand, if $f$ is not accessible we have that $\cW^u(f)$ is minimal and, in particular, it is transitive. 
On the other hand, if $f$ is accessible, since we already know that $\cW^s(f)$ is minimal, we can apply Theorem \ref{robustness} (with the roles of $u$ and $s$ reversed) We get the transitivity of $\cW^u(f)$ by applying Theorem \ref{mintrans}.
This ends the proof of the theorem.

\end{proof}


\end{document}